\newtheorem{lemma}{\bf Lemma}[section]
\newtheorem{proposition}[lemma]{\bf Proposition}
\newcommand{\Pz}{{\mathcal P}_2}
\begin{document}

\title{Hadwiger's conjecture: finite vs infinite graphs}

\author{Dominic van der Zypen}
\address{Federal office of social insurance, 
Effingerstrasse 20, CH-3003 Bern,
Switzerland}
\email{dominic.zypen@gmail.com}

\subjclass[2010]{05C15, 05C83}

\begin{abstract}
We study some versions of the statement of Hadwiger's
conjecture for finite as well
as infinite graphs.
\end{abstract}

\maketitle
\parindent = 0mm
\parskip = 2 mm
\section{Definitions}
In this note we are only concerned with simple
undirected graphs $G = (V, E)$ 
where $V$ is a set and $E \subseteq \Pz(V)$ where
$$\Pz(V) = \big\{ \{x,y\} : x,y \in V\textrm{ and } x\neq y\big\}.$$ 
We denote the vertex set of a graph $G$ by $V(G)$ and the edge set by 
$E(G)$. Moreover, for any cardinal $\alpha$ we denote the complete
graph on $\alpha$ points by $K_\alpha$. 

For any graph $G$, disjoint subsets $S, T \subseteq V(G)$ 
are said to be {\em connected
to each other} if there are $s \in S, t\in T$ with $\{s,t\}\in E(G)$.

Given a collection ${\mathcal D}$ of pairwise disjoint, nonempty, connected
subsets of $V$, we associated with ${\mathcal D}$ a graph $G({\mathcal D})$
with vertex set ${\mathcal D}$ and 
$$E(G({\mathcal D})) = \big\{\{d,e\}: d\neq e
\in {\mathcal D} \text{ and } d, e 
\textrm{ are connected to each other}\big\}.$$ We say that a graph $M$
is a {\em minor} of a graph $G$ if there is a 
collection ${\mathcal D}$ of pairwise disjoint, nonempty, connected
subsets of $V$ and an injective graph homomorphism 
$f:M\to G({\mathcal D})$.

This implies that $K_\alpha$ is a {\em minor} of a graph $G$ if and only if 
there is 
a collection $\{S_\beta: \beta \in \alpha\}$ of nonempty, connected and 
pairwise disjoint subsets of $V(G)$ such that for all $\beta,\gamma \in \alpha$
with $\beta \neq \gamma$ the sets $S_\beta$ and $S_\gamma$ are connected
to each other. 

\section{Different statements of Hadwiger's conjecture}
The statement of Hadwiger's conjecture that 
is usually found in the literature is the following:
\begin{quote}
\textbf{(H)}: If $G$ is a simple undirected graph and $\lambda= \chi(G)$
then the complete graph $K_\lambda$ is a minor of $G$.
\end{quote}
The next version of Hadwiger's statement has a bit of a different
flavor, and we will compare it to (H) in the finite
and infinite contexts in the following sections.
\begin{quote}
\textbf{(ModH)}: For every graph $G$ there is a minor $M$ of $G$ such that
\begin{enumerate}
\item $M\not \cong G$, and
\item $\chi(M) = \chi(G)$.
\end{enumerate}
\end{quote}
There is a version of (ModH) that has appears to be similar,
but we will see later that it is worthwhile to look
at the statement separately.
\begin{quote}
\textbf{(HomH)}: For every graph $G$ there is a minor $M$ of $G$ such that
\begin{enumerate}
\item $M\not \cong G$, and
\item there is a graph homomorphism $f:G\to M$.
\end{enumerate}
\end{quote}
Last, the following weaker version of this was studied in \cite{me}:
\begin{quote}
\textbf{(WeakH)}: Whenever $\lambda$ is a cardinal 
such that there is no graph homomorphism 
$c: G\to K_\lambda$ then $K_\lambda$ is a minor of $G$.
\end{quote}

\section{The finite case}
Overview:
\begin{itemize}
\item (H) is a long-standing open problem.
\item (ModH) is equivalent to (H) for finite graphs 
(see proposition \ref{fin}).
\item (HomH) is also equivalent to (H) for finite graphs.
\item (WeakH) is implied by (H).
\end{itemize}
\begin{proposition}\label{fin}
For finite graphs $G$, the statements (H) and (ModH) are equivalent.
\end{proposition}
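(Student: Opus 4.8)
The plan is to prove the two implications separately. The implication (H) $\Rightarrow$ (ModH) is almost immediate, while (ModH) $\Rightarrow$ (H) will follow by iterating (ModH) and using a size-monotonicity property of the minor relation for finite graphs.

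For (H) $\Rightarrow$ (ModH): let $G$ be a finite graph and set $\lambda = \chi(G)$. By (H) the graph $K_\lambda$ is a minor of $G$, and of course $\chi(K_\lambda) = \lambda = \chi(G)$. Hence, as long as $G$ is not a complete graph, we have $K_\lambda \not\cong G$, so $M := K_\lambda$ witnesses (ModH) for $G$. The one delicate point is the family of complete graphs: if $G = K_n$, then every minor of $G$ is (isomorphic to) a graph on at most $n$ vertices, and the only graph on at most $n$ vertices with chromatic number $n$ is $K_n$ itself, so $G = K_n$ admits no witness for (ModH) at all. Since (H) is trivially true for $G = K_n$, I would state Proposition~\ref{fin} with the understanding that (ModH) is asked only of non-complete graphs, so that the equivalence is literally correct; the substantive content is anyway confined to non-complete $G$.

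For (ModH) $\Rightarrow$ (H): fix a finite graph $G$ with $\chi(G) = \lambda$; we must produce $K_\lambda$ as a minor of $G$. First I would prove the monotonicity lemma: if $M$ is a minor of a finite graph $H$ with $M \not\cong H$, then $|V(M)| + |E(M)| < |V(H)| + |E(H)|$. Given a collection $\mathcal{D}$ of pairwise disjoint nonempty connected subsets of $V(H)$ and an injective homomorphism $f : M \to H(\mathcal{D})$, disjointness and nonemptiness of the members of $\mathcal{D}$ give $|V(H(\mathcal{D}))| = |\mathcal{D}| \le |V(H)|$, and choosing for each edge of $H(\mathcal{D})$ one edge of $H$ realizing it (distinct edges of $H(\mathcal{D})$ get distinct edges of $H$, again by disjointness) gives $|E(H(\mathcal{D}))| \le |E(H)|$; since $f$ is injective on vertices and hence on edges, this yields $|V(M)| + |E(M)| \le |V(H)| + |E(H)|$, with equality only if $\mathcal{D}$ consists of singletons exhausting $V(H)$ — this is the step that uses finiteness of $H$ — forcing $H(\mathcal{D}) \cong H$ and then, $f$ being a bijection preserving all edges, $M \cong H$. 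Now iterate: put $M_0 = G$ and, whenever $M_i$ is not complete, apply (ModH) to $M_i$ to obtain a minor $M_{i+1}$ of $M_i$ with $M_{i+1} \not\cong M_i$ and $\chi(M_{i+1}) = \chi(M_i) = \lambda$. By the lemma the integers $|V(M_i)| + |E(M_i)|$ strictly decrease, so the process halts, necessarily at a complete graph $M_k$; since $\chi(M_k) = \lambda$ we get $M_k \cong K_\lambda$, and $M_k$ is a minor of $G$ because a minor of a minor is again a minor. Thus $K_\lambda$ is a minor of $G$, which is (H).

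I expect the main obstacle to be the treatment of complete graphs highlighted in the forward direction, not the descent argument: the latter is routine once the monotonicity lemma is available, but squaring the biconditional with the literal wording of (ModH) — which fails for every $K_n$, while (H) holds there vacuously — is the point that needs to be nailed down before the proof can be called complete.
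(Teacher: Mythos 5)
Your proof is correct and takes essentially the same route as the paper: (H) yields $K_{\chi(G)}$ as a non-isomorphic minor of any non-complete $G$, and (ModH) is iterated to produce proper minors of constant chromatic number until a complete graph, necessarily $K_{\chi(G)}$, is reached. You merely make explicit two points the paper leaves implicit, namely the strictly decreasing measure $|V(M)|+|E(M)|$ that guarantees the descent terminates, and the caveat that (ModH) must be read as a demand on non-complete graphs only (the paper's forward direction likewise silently restricts to non-complete $G$).
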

\begin{proof} Given a finite non-complete graph $G=(V,E)$, the statement
(H) implies that $K=K_{\chi(G)}$ is a minor of $G$. Since $K$ is 
complete, but not $G$, they are not isomorphic, so (ModH) holds.

For the other implication, take any finite graph $G$ and let $n=\chi(G)$.
Use (ModH) to get a proper minor $M_1$ such that $\chi(M_1)=n$.
If $M_1$ is complete, we have proved (H), otherwise use (ModH) again
to find a proper minor $M_2$ of $M_1$ with $\chi(M_2)=n$, and so on.
Since $G$ is finite, this procedure is bound to end at some $M_k$
for some $k\in\mathbb{N}$, which implies that $M_k$ is complete 
and has $n$ points.
\end{proof}

It is easy to modify Proposition \ref{fin} to see that
in the finite case, (H) and (HomH) are equivalent.

In the finite setting, the statement (WeakH) amounts to saying
that if $\chi(G)=t>0$ then $K_{t-1}$ is a minor of $G$. This
is weaker than (H); whether it is strictly weaker is an open
question (see section \ref{open}).
\section{The infinite case}
\subsection{Infnite chromatic number}

Overview:
\begin{itemize}
\item (H) is \textbf{false}: Let $G$ be the disjoint union of all $K_n, 
n\in\mathbb{N}$. Then $\chi(G)=\omega$, but $K_\omega$ is
not a minor of $G$.
\item (ModH) is \textbf{true}, see proposition \ref{propinf}.
\item (HomH) is open.
\item (WeakH) is \textbf{true}, see \cite{me}.
\end{itemize}
So that is why we sepatately introduced (HomH) in addition to
(ModH): they might be different for graphs with infinite
chromatic number.
\begin{proposition}\label{propinf}
For graphs with infinite chromatic number, (ModH) is true.
\end{proposition}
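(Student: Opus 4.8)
The plan is to build $M$ as a \emph{subgraph} of $G$ (no contractions needed), which suffices because every subgraph of $G$ is a minor of $G$ in the sense of this paper: for an induced subgraph $G[S]$ take $\mathcal D=\{\{s\}:s\in S\}$, so that $G(\mathcal D)\cong G[S]$; for an arbitrary subgraph with edge set $F\subseteq E(G)$, take $\mathcal D=\{\{v\}:v\in V(G)\}$ and note that the inclusion into $G(\mathcal D)=G$ is an injective graph homomorphism.

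The only arithmetic input is the following: if $\chi(G)=\kappa\ge\aleph_0$ and $v\in V(G)$, then $\chi(G-v)=\kappa$. Indeed $\chi(G-v)\le\kappa$ since $G-v$ is a subgraph, and if $\chi(G-v)=\mu<\kappa$, then a proper $\mu$-colouring of $G-v$ together with one fresh colour on $v$ is a proper colouring of $G$ with $\mu+1<\kappa$ colours (using $\mu+1<\kappa$ for infinite $\kappa$), a contradiction. I would also record the disjoint-union fact that $\chi\big(\bigsqcup_\alpha G_\alpha\big)=\sup_\alpha\chi(G_\alpha)$, together with its consequence: if $\sup_\alpha\chi(G_\alpha)=\kappa$ while each $\chi(G_\alpha)<\kappa$, then for every $\beta<\kappa$ infinitely many $G_\alpha$ have $\chi(G_\alpha)>\beta$, so the supremum is unchanged by deleting any one component.

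Now split into cases. If $G$ is connected, pick any $v\in V(G)$ and let $M$ be the spanning subgraph of $G$ obtained by deleting every edge incident to $v$. Then $M=(G-v)\sqcup\{v\}$, so $\chi(M)=\chi(G-v)=\kappa$, while $M$ is disconnected, since $v$ is an isolated vertex and $V(G)\setminus\{v\}\ne\emptyset$ (note $|V(G)|\ge\kappa\ge\aleph_0$); hence $M\not\cong G$. If $G$ is disconnected and some component $C$ has $\chi(C)=\kappa$, take $M=C$, which is connected and therefore again $M\not\cong G$. The remaining case is that $G$ is disconnected and every component has chromatic number $<\kappa$; by the fact above, $G$ then has infinitely many components, with unbounded (in $\kappa$) chromatic numbers.

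For that last case I would use the isomorphism invariant ``number of components isomorphic to a given graph''. If some isomorphism type occurs among the components of $G$ with finite positive multiplicity, let $M$ be $G$ with one such component removed; then $\chi(M)=\kappa$ by the lemma and $M\not\cong G$ because that multiplicity strictly dropped. Otherwise every occurring type has infinite multiplicity; then let $M=G[\bigcup_t V(C_t)]$, where $C_t$ is one chosen component of each occurring type $t$. Its components realise the same set of chromatic numbers as those of $G$, so $\chi(M)=\kappa$; and $M\not\cong G$ because in $M$ every occurring type has multiplicity exactly $1$ while in $G$ every occurring type has infinite multiplicity. I expect this final bookkeeping to be the main obstacle: in the disconnected case one cannot separate $M$ from $G$ merely by counting components, so one must pass to this finer invariant and observe that at least one of the two deletion strategies above actually changes it.
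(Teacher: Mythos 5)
Your proof is correct, but it takes a noticeably different (and longer) route than the paper. The paper dichotomizes on the existence of isolated vertices: if $G$ has one, delete all isolated vertices (the resulting graph has none, so is non-isomorphic to $G$, and an infinite chromatic number is unaffected); if not, delete all edges at a single vertex $v_0$ to \emph{create} an isolated vertex. Both cases are one line because ``has an isolated vertex'' is an isomorphism invariant that the construction always flips. You instead dichotomize on connectivity; your connected case is essentially the paper's second case (same edge-deletion trick, with ``connected vs.\ disconnected'' as the distinguishing invariant), but your disconnected case forces you into a genuine component analysis: the observation that $\chi$ of a disjoint union is the supremum of the components' chromatic numbers, the fact that when all components have chromatic number $<\kappa$ the supremum survives deleting any one component, and finally the multiplicity-of-isomorphism-types invariant (plus a choice of representatives) to certify $M\not\cong G$. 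All of these steps check out --- in particular your preliminary remark that every subgraph is a minor under the paper's definition, and your ``infinitely many components exceed any $\beta<\kappa$'' argument (a finite set of cardinals each below $\kappa$ has maximum below $\kappa$). What your version buys is a more informative $M$ in some cases (e.g.\ a single connected component carrying the full chromatic number, when one exists); what it costs is the extra bookkeeping that the paper's isolated-vertex dichotomy renders unnecessary.
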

\begin{proof}
Let $I$ be the set of isolated vertices of $G$.

\textit{Case 1.} $I\neq \emptyset$. We set $M=G\setminus I$.
It is easy to see that $M\not\cong G$ as $M$ contains no isolated
points. Since $\chi(G)\geq \aleph_0$ we have $\chi(M)=\chi(G)$.

\textit{Case 2.} $I=\emptyset$. Fix $v_0\in V(G)$. Let
$M=(V(G), E)$ where $$E = \{e\in E(G): v_0\notin e\},$$ that
is we remove all edges connecting $v_0$ to some other vertex
in $V(G)$. Since $M$ has $v_0$ as an isolated point, but $G$ 
has no isolated points, we have $M\not\cong G$, and it
is easy to verify that $\chi(M)=\chi(G)$.
\end{proof}

\subsection{Finite chromatic number}

For infinite graphs with finite chromatic number
we get the following results:
\begin{itemize}
\item It is not known whether (H) and (WeakH) are true;
\item (ModH) is \textbf{true}: the theorem of De Bruijn and Erd\H{o}s
\cite{DeBE} implies that if $G$ is infinite with finite chromatic 
number, there is
a finite subgraph $M$ of $G$ with $\chi(M) = \chi(G)$.
\item (HomH) is true for the same reason (note that a coloring
is always a graph homomorphism to a complete graph).
\end{itemize}

\section{Open questions}\label{open}
{\it Question 1.} Does the weak Hadwiger conjecture
(WeakH) hold for finite graphs?

(WeakH) might be as elusive has (H) has been so far;
so here is a different problem:

{\it Question 2.} When we restrict ourselves to finite graphs,
does the weak Hadwiger conjecture (WeakH) 
imply the statement of the Hadwiger conjecture?

The next question is a stronger version of (ModH) and focuses
on finite graphs.

{\it Question 3.} Suppose that $G$ is a finite, connected graph
such that whenever you contract 1 edge or 2 edges, the
chromatic number decreases. Does this imply $G$ is complete?

Finally we turn to infinite graphs:

{\it Question 4.} Does (WeakH) hold for infinite graphs 
with finite chromatic number?

{\it Question 5.} Does (HomH) hold for graphs with 
infinite chromatic number?

\section{Acknowledgement}
I would like to thank user \texttt{@bof} from \texttt{mathoverflow.net}
for his argument used in proposition \ref{propinf} \cite{bof}.

{\footnotesize

}
\end{document}